\newtheorem{definition}{Definition}[section]
\theoremstyle{definition}
\newtheorem{example}{Example}[section]
\newtheorem{theorem}{Theorem}[section]
\begin{document}
\title{Determination of order in linear fractional differential equations}
\author{Mirko D'Ovidio,\ Paola Loreti,\ Alireza Momenzadeh and Sima Sarv Ahrabi}
\maketitle
\begin{abstract}
In this article, the order of some classes of fractional linear differential equations is determined, based on asymptotic behaviour of the solution as time tends to infinity. The order of fractional derivative has been proved to be of great importance in an accurately appropriate simulation of the system under study. Specifically, by representing the asymptotic expansion of the solution, it could be obviously demonstrated that the decay rate of the solution is influenced by the order of fractional differentiation. The numerical investigation is conducted into the proven formulae.
\end{abstract}
\section{Introduction}\label{sec:1}
\setcounter{section}{1}
The practical significance of fractional calculus has been recently discerned as a vastly superior method of describing the long-memory processes and had a remarkable development over the last few years, both in mathematical and non-mathematical fields \cite{baleanu2010new,diethelm2010analysis,kilbas2006theory,mainardi2010fractional,sabatier2007advances,caputo2015new}. More specifically, fractional differential equations (FDEs) have been proven extremely important for more accurately modelling of many physical phenomena \cite{carpinteri2014fractals,rossikhin2010application,baleanu2009central,magin2010fractional,kamocki2014pontryagin}.
Inverse problems to FDEs occur in many branches of science. Such problems have been investigated, for instance, in fractional diffusion equation \cite{cheng2009uniqueness,hatano2013determination,lopushansky2014one,sakamoto2011initial,zhang2011inverse} and inverse boundary value problem for semi-linear fractional telegraph equation \cite{lopushanska2015inverse}. Specifically in \cite{hatano2013determination}, it has been demonstrated that determination of $\beta $, the order of fractional differential operator, is definitely crucial to the appropriate simulation of the anomalous diffusion in order to specify that the transport phenomenon exhibits sub-diffusion or super-diffusion for respectively $\beta <1$ and $\beta >1$. The authors in \cite{hatano2013determination} have presented and proven a theorem, the idea behind which is seeking solutions to an inverse problem, i.e. determination of the order of a fractional diffusion equation; therefore, this persuaded us to prove formulae indicating the relationship between the fractional order and the asymptotic behaviour of the exact solutions to several different class of fractional differential equations. The following sections are allocated to preliminaries to fractional calculus, the main results and numerical simulation.
\section{Preliminaries}
It is appropriate to briefly recall some critical bases of fractional calculus.
\subsection{Fractional integral and derivatives}
In this part, the definitions of fractional integral and derivative of Riemann-Liouville type and also the fractional derivative of Caputo type are represented in summary \cite{kilbas2006theory,podlubny1998fractional}.
\begin{definition}
\label{def1}
Let ${{t}_{0}}$ be a real number and let the function $f:\left( {{t}_{0}},+\infty  \right)\to \mathbb{R}$ be continuous and integrable in every finite interval $\left( {{t}_{0}},t \right)$. The Riemann-Liouville fractional integral of order $\beta \in \mathbb{C}$ $\left( \Re \left( \beta  \right)>0 \right)$ of the function $f$ is defined by
\begin{equation}
\label{2}
\left( I_{{{t}_{0}}}^{\beta }f \right)\left( t \right)=\frac{1}{\Gamma \left( \beta  \right)}\int_{{{t}_{0}}}^{t}{{{\left( t-\tau  \right)}^{\beta -1}}f\left( \tau  \right)d\tau },\text{ }t>{{t}_{0}}.
\end{equation}
\end{definition}
\begin{definition}
\label{def2}
Let ${{t}_{0}}$ be a real number, $\beta \in \mathbb{C}$ $\left( \Re \left( \beta  \right)>0 \right)$, $n=\left[ \Re \left( \beta  \right) \right]+1$ and let the function $f:\left( {{t}_{0}},+\infty  \right)\to \mathbb{R}$ be continuous and integrable in every finite interval $\left( {{t}_{0}},t \right)$. The Riemann-Liouville fractional derivative of order $\beta $ of the function $f$ for $t>{{t}_{0}}$ is defined by
\begin{equation}
\label{3}
\begin{split}
\left( D_{{{t}_{0}}}^{\beta }f \right)\left( t \right)&={{\left( \frac{d}{dt} \right)}^{n}}\left( I_{{{t}_{0}}}^{n-\beta }f \right)\left( t \right)\\ &=\frac{1}{\Gamma \left( n-\beta  \right)}{{\left( \frac{d}{dt} \right)}^{n}}\int_{{{t}_{0}}}^{t}{{{\left( t-\tau  \right)}^{n-\beta -1}}f\left( \tau  \right)d\tau }
\end{split}
\end{equation}
\end{definition}
The fractional differentiation of Riemann-Liouville type possesses remarkable significance for advancement in the theory of fractional differentiation and integrals and in pure mathematics \cite{podlubny1998fractional} but, nonetheless, the Riemann-Liouville differential operator results in initial conditions incorporating the limit values of the derivative at the lower terminal $t={{t}_{0}}$, for instance $\lim_{ t\rightarrow {t}_{0} }\left( D_{{{t}_{0}}}^{\beta -1}u \right)\left( t \right)={{u}_{1}}.$ As there is no physical interpretation for this type of initial conditions, Caputo differential operator plays a major role in physical phenomena due to the fact that the initial conditions for the FDEs with Caputo derivative are the same as those for integer-order differential equations. Caputo fractional derivative \cite{caputo1967linear} is defined as follows:
\begin{definition}
\label{def3}
Let ${{t}_{0}}\in \mathbb{R}$, $\beta\in\mathbb{C}$ $\left( \Re \left(\beta\right)>0 \right)$, $n-1<\Re\left(\beta\right)<n$ $\left(n\in \mathbb{N} \right)$ and also let the function $f$ be continuous and have $n$ continuous derivatives in the interval $\left({{t}_{0}},+\infty \right)$. The Caputo fractional derivative of order $\beta $ of the function $f$ is defined by
\begin{equation}
\label{4}
\left( D_{t}^{\beta }f \right)\left( t \right)=\frac{1}{\Gamma\left( n-\beta\right)}\int_{{{t}_{0}}}^{t}{{{\left( t-\tau\right)}^{n-\beta -1}}{{f}^{\left( n \right)}}\left(\tau\right)d\tau },\quad { }t>{{t}_{0}}.
\end{equation}
\end{definition}
Furthermore, sequential fractional derivative (see \cite{kilbas2006theory,podlubny1998fractional}) is defined by
\begin{equation}
\label{sequential}
{{\mathscr{D}}^{n\beta }}u\left( t \right)=\underbrace{{{D}^{\beta }}{{D}^{\beta }}...{{D}^{\beta }}u\left( t \right)}_{n}
\end{equation}
where ${{D}^{\beta }}$ could be Riemann-Liouville, Caputo or any other type of fractional derivative not considered here.
\subsection{Mittag-Leffler function and its derivatives}
The generalization of exponential function, ${{e}^{z}}$, was introduced by Mittag-Leffler and is denoted by
\begin{equation}
\label{5}
{{E}_{\alpha}}\left( z \right)=\sum\limits_{k=0}^{\infty }{\frac{{{z}^{k}}}{\Gamma \left( k\alpha +1\right)}},\quad \alpha ,z\in \mathbb{C},\quad \Re \left( \alpha  \right)>0.
\end{equation}
The two-parameter function of Mittag-Leffler type, which first appeared in an article by Wiman \cite{wiman1905fundamentalsatz} and studied by Agarwal and Humbert \cite{agarwal1953propos,humbert1953quelques}, is defined by
\begin{equation}
\label{6}
{{E}_{\alpha ,\beta }}\left( z \right)=\sum\limits_{k=0}^{\infty }{\frac{{{z}^{k}}}{\Gamma \left( k\alpha +\beta  \right)}},\quad \alpha ,\beta ,z\in \mathbb{C},\quad \Re \left( \alpha  \right)>0.
\end{equation}
In the case $\alpha $ and $\beta $ are real and positive numbers, the series converges for all values of the argument $z$, therefore, the Mittag-Leffler function ${{E}_{\alpha ,\beta }}\left( z \right)$ is an entire function of the order ${{\alpha }^{-1}}$.
The Mittag-Leffler function satisfies the recurrence property
\begin{equation}
\label{9}
{{E}_{\alpha ,\beta }}\left( z \right)=-\frac{1}{z\Gamma \left( \beta -\alpha  \right)}+\frac{1}{z}{{E}_{\alpha ,\beta -\alpha }}\left( z \right).
\end{equation}
By the fractional differentiation operator of the Riemann-Liouville type $D_{0}^{\gamma }f$ ($\gamma \in \mathbb{R}$), the Mittag-Leffler function satisfies the following differentiation formula \cite{podlubny1998fractional}
\begin{equation}
\label{10}
D_{0}^{\gamma }\left( {{z}^{\alpha k+\beta -1}}E_{\alpha ,\beta }^{\left( k \right)}\left( \lambda {{z}^{\alpha }} \right) \right)={{z}^{\alpha k+\beta -\gamma -1}}E_{\alpha ,\beta -\gamma }^{\left( k \right)}\left( \lambda {{z}^{\alpha }} \right).
\end{equation}
The particular case of the relationship (\ref{10}) for $n\in \mathbb{N}$ has the form below
\begin{equation}
\label{11}
{{\left( \frac{d}{dz} \right)}^{n}}\left( {{z}^{\beta -1}}{{E}_{\alpha ,\beta }}\left( \lambda {{z}^{\alpha }} \right) \right)={{z}^{\beta -n-1}}{{E}_{\alpha ,\beta -n}}\left( \lambda {{z}^{\alpha }} \right),
\end{equation}
and the following practical formulae could be directly derived from (\ref{11}):
\begin{equation}
\label{12}
\frac{d}{dz}{{E}_{\alpha }}\left( \lambda {{z}^{\alpha }} \right)=\frac{1}{z}{{E}_{\alpha ,0}}\left( \lambda {{z}^{\alpha }} \right)=\lambda {{z}^{\alpha -1}}{{E}_{\alpha ,\alpha }}\left( \lambda {{z}^{\alpha }} \right),
\end{equation}
\begin{equation}
\label{13}
\frac{d}{dz}\left( {{E}_{\alpha ,\alpha }}\left( \lambda {{z}^{\alpha }} \right) \right)=\frac{1}{z}{{E}_{\alpha ,\alpha -1}}\left( \lambda {{z}^{\alpha }} \right)+\frac{\left( 1-\alpha  \right)}{z}{{E}_{\alpha ,\alpha }}\left( \lambda {{z}^{\alpha }} \right),
\end{equation}
\begin{equation}
\label{14}
\frac{d}{dz}\left( {{z}^{\beta -1}}{{E}_{\alpha ,\beta }}\left( \lambda {{z}^{\alpha }} \right) \right)={{z}^{\beta -2}}{{E}_{\alpha ,\beta -1}}\left( \lambda {{z}^{\alpha }} \right),
\end{equation}
\begin{equation}
\label{15}
\frac{d}{dz}\left( z{{E}_{\alpha ,2}}\left( \lambda {{z}^{\alpha }} \right) \right)={{E}_{\alpha }}\left( \lambda {{z}^{\alpha }} \right).
\end{equation}
\subsection{Asymptotic expansion of Mittag-Leffler function}
The asymptotic behaviour of two-parameter Mittag-Leffler function ${{E}_{\alpha ,\beta }}\left( z \right)$ is complicated for $\alpha >0$. The asymptotic expansion of ${{E}_{\alpha ,\beta }}\left( z \right)$ $\left( \left| z \right|\to \infty\right)$ diverges greatly for $0<\alpha <2$ and $\alpha \ge 2$. In this section, the asymptotic behaviour of Mittag-Leffler function is briefly stated for the case $0<\alpha <2$. Referring to \cite{podlubny1998fractional}, the issue could be perfectly investigated with scrupulous attention to detail.
Suppose that $0<\alpha <2$, $\beta ,z\in \mathbb{C}$ and $\mu$ be an arbitrary real number such that $\frac{\pi }{2}\alpha <\mu <\min \left( \pi ,\pi \alpha  \right)$. Then the following expansions hold
\begin{equation}
\label{19}
\begin{split}
{{E}_{\alpha ,\beta }}\left( z \right)= \frac{1}{\alpha }{{z}^{\frac{1-\beta }{\alpha }}}\exp \left( {{z}^{\frac{1}{\alpha }}} \right)-& \sum\limits_{k=1}^{n}{\frac{1}{\Gamma \left( \beta -k\alpha  \right){{z}^{k}}}+O\left( {{\left| z \right|}^{-n-1}} \right)},\\
\quad &\left| z \right|\to \infty , \quad \left| \arg \left( z \right) \right|\le \mu .
\end{split}
\end{equation}
and
\begin{equation}
\label{20}
\begin{split}
{{E}_{\alpha ,\beta }}\left( z \right)= -\sum\limits_{k=1}^{n}{\frac{1}{\Gamma \left( \beta -k\alpha  \right){{z}^{k}}}}+ &O\left( {{\left| z \right|}^{-n-1}} \right),\\& \quad \left| z \right|\to \infty ,\quad \mu \le \left| \arg \left( z \right) \right|\le \pi .
\end{split}
\end{equation}
By applying the expansion (\ref{20}) to the reals $\left( z\in \mathbb{R} \right)$, the following advantageous formulae could be acquired
\begin{equation}
\label{22}
{{E}_{\alpha }}\left( \lambda {{z}^{\alpha }} \right)=-\frac{{{z}^{-\alpha }}}{\lambda \Gamma \left( 1-\alpha  \right)}+O\left( \frac{1}{{{\left| \lambda \right| }^{2}}{{z}^{2\alpha }}} \right),\quad  z\to \infty ,\quad  z>0,\quad  \lambda <0,
\end{equation}
\begin{equation}
\label{24}
{{E}_{\alpha ,\alpha }}\left( \lambda {{z}^{\alpha }} \right)=\frac{\alpha {{z}^{-2\alpha }}}{{{\lambda }^{2}}\Gamma \left( 1-\alpha  \right)}+ O\left( \frac{1}{{{\left| \lambda \right| }^{3}}{{z}^{3\alpha }}} \right),\quad z\to \infty ,\quad z>0,\quad \lambda <0.
\end{equation}
\section{Main result}
This section is intended to determine the order of several classes of fractional differential equations by using the asymptotic behaviour of the exact solutions, as time tends to infinity.
\begin{theorem}
\label{th1}
Let $0<\beta \le 1$, ${{t}_{0}}>0$ and also let $D_{{t}_{0}}^{\beta }u$ represents the Riemann-Liouville differentiation operator. Consider the sequential linear differential equation of fractional order
\begin{equation}
\label{105}
{{\mathscr{D}}_{{t}_{0}}^{2\beta }}u+{{a}_{1}}{{\mathscr{D}}_{{t}_{0}}^{\beta }}u+{{a}_{0}}u=0,
\end{equation}
with the initial condition $u\left( {{t}_{0}} \right)={{u}_{0}}$ and $\mathscr{D}_{{t}_{0}}^{\beta }u\left( {{t}_{0}} \right)={{u}_{1}}$, and let ${{a}_{0}}$ and ${{a}_{1}}$ are reals such that ${{r}_{1}}$ and ${{r}_{2}}$, the roots of the characteristic equation ${{r}^{2}}+{{a}_{1}}r+{{a}_{0}}=0,$ are distinct and real negative numbers. The following formula holds
\begin{equation}
\beta =-1-\lim_{t\to\infty} \frac{t{u}'}{u}
\end{equation}
\end{theorem}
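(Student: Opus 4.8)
The plan is to reduce everything to the Mittag-Leffler asymptotics in formula (\ref{24}), applied to an explicit representation of the solution. Since the hypotheses give $a_1=-(r_1+r_2)$ and $a_0=r_1r_2$, the sequential operator factors and (\ref{105}) becomes $(\mathscr{D}_{t_0}^{\beta}-r_1)(\mathscr{D}_{t_0}^{\beta}-r_2)u=0$. First I would record that the first-order sequential equation $\mathscr{D}_{t_0}^{\beta}w=rw$ has the Riemann-Liouville fundamental solution $w(t)=(t-t_0)^{\beta-1}E_{\beta,\beta}\!\left(r(t-t_0)^{\beta}\right)$, which is verified directly from the differentiation formula (\ref{10}). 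Exploiting the factorization (solving the homogeneous part for the root $r_2$ and the resulting inhomogeneous equation for $r_1$, which is legitimate because $r_1\neq r_2$), I would then write the general solution of (\ref{105}) as
\[
u(t)=c_1(t-t_0)^{\beta-1}E_{\beta,\beta}\!\left(r_1(t-t_0)^{\beta}\right)+c_2(t-t_0)^{\beta-1}E_{\beta,\beta}\!\left(r_2(t-t_0)^{\beta}\right),
\]
where $c_1,c_2$ are fixed by matching $u(t_0)=u_0$ and $\mathscr{D}_{t_0}^{\beta}u(t_0)=u_1$.

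Next I would extract the leading behaviour as $t\to\infty$. Because $r_1,r_2<0$, formula (\ref{24}) with $\alpha=\beta$, $\lambda=r_i$ and $z=t-t_0$ gives $E_{\beta,\beta}\!\left(r_i(t-t_0)^{\beta}\right)\sim \beta(t-t_0)^{-2\beta}/\bigl(r_i^{2}\Gamma(1-\beta)\bigr)$, so each fundamental solution decays like $(t-t_0)^{-\beta-1}$ and
\[
u(t)\sim\frac{\beta}{\Gamma(1-\beta)}\left(\frac{c_1}{r_1^{2}}+\frac{c_2}{r_2^{2}}\right)(t-t_0)^{-\beta-1},\qquad t\to\infty .
\]
For $u'$ I would differentiate each term using formula (\ref{14}), obtaining $u'(t)=\sum_i c_i(t-t_0)^{\beta-2}E_{\beta,\beta-1}\!\left(r_i(t-t_0)^{\beta}\right)$, and then apply the expansion (\ref{20}) with second parameter $\beta-1$. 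Here the $k=1$ contribution drops out because $1/\Gamma(-1)=0$, leaving $E_{\beta,\beta-1}\!\left(r_i(t-t_0)^{\beta}\right)\sim -(t-t_0)^{-2\beta}/\bigl(\Gamma(-1-\beta)\,r_i^{2}\bigr)$, hence $u'(t)\sim-\frac{1}{\Gamma(-1-\beta)}\left(\frac{c_1}{r_1^{2}}+\frac{c_2}{r_2^{2}}\right)(t-t_0)^{-\beta-2}$.

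Finally I would form the quotient. The common factor $c_1/r_1^{2}+c_2/r_2^{2}$ cancels, and since $t/(t-t_0)\to1$,
\[
\lim_{t\to\infty}\frac{t\,u'(t)}{u(t)}=-\frac{\Gamma(1-\beta)}{\beta\,\Gamma(-1-\beta)} .
\]
The identity $\Gamma(1-\beta)=(-\beta)(-\beta-1)\Gamma(-1-\beta)=\beta(\beta+1)\Gamma(-1-\beta)$, from two applications of $\Gamma(z+1)=z\Gamma(z)$, reduces the right-hand side to $-(\beta+1)$, which rearranges to the asserted formula $\beta=-1-\lim_{t\to\infty}(t u'/u)$.

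I expect the main obstacle to be twofold. First, justifying the explicit solution representation for the sequential Riemann-Liouville problem, in particular that both fundamental solutions carry the parameter pair $(\beta,\beta)$ and that $u_0,u_1$ genuinely determine $c_1,c_2$ (the condition $u(t_0)=u_0$ being delicate for $\beta<1$, where $(t-t_0)^{\beta-1}$ is singular at $t_0$). Second, and more importantly for the limit, guaranteeing that the leading coefficient $c_1/r_1^{2}+c_2/r_2^{2}$ is nonzero; otherwise the $(t-t_0)^{-\beta-1}$ terms cancel and the limit would be dictated by a lower-order term. One must also verify carefully that the $k=1$ terms in both asymptotic expansions vanish through the poles of $\Gamma$, so that the powers of $(t-t_0)$ appearing in $u$ and in $u'$ differ by exactly one.
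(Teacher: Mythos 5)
Your proposal is correct and takes essentially the same route as the paper's own proof: the same Mittag-Leffler representation $u(t)=c_1 t^{\beta-1}E_{\beta,\beta}(r_1 t^{\beta})+c_2 t^{\beta-1}E_{\beta,\beta}(r_2 t^{\beta})$ (cited there from the literature rather than derived by factorization), differentiation via (\ref{14}), the expansion (\ref{20}) with the $k=1$ terms killed by the poles of $\Gamma$, and cancellation of the common coefficient in $tu'/u$; your use of (\ref{24}) together with $\Gamma(1-\beta)=\beta(\beta+1)\Gamma(-1-\beta)$ is just an equivalent rewriting of the paper's $\Gamma(-\beta)$ bookkeeping. The caveat you flag about needing $\frac{c_1}{r_1^{2}}+\frac{c_2}{r_2^{2}}\neq 0$ is a genuine hypothesis that the paper's proof also leaves implicit and unaddressed.
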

\begin{proof}
The exact solution to Eq.(\ref{105}) has the form \cite{kilbas2006theory}
\begin{equation}
\label{107}
u\left( t \right)={{c}_{1}}{{t}^{\beta -1}}{{E}_{\beta ,\beta }}\left( {{r}_{1}}{{t}^{\beta }} \right)+{{c}_{2}}{{t}^{\beta -1}}{{E}_{\beta ,\beta }}\left( {{r}_{2}}{{t}^{\beta }} \right).
\end{equation}
where ${{c}_{1}}$ and ${{c}_{2}}$ depend on the initial conditions. The first derivative of $u\left( t \right)$ could be calculated by using (\ref{14}) as below
\begin{equation}
\label{108}
{u}'\left( t \right)={{c}_{1}}{{t}^{\beta -2}}{{E}_{\beta ,\beta -1}}\left( {{r}_{1}}{{t}^{\beta }} \right)+{{c}_{2}}{{t}^{\beta -2}}{{E}_{\beta ,\beta -1}}\left( {{r}_{2}}{{t}^{\beta }} \right).
\end{equation}
By referring to (\ref{20}), the asymptotic expansion of $u\left( t \right)$ and ${u}'\left( t \right)$ are respectively
\begin{equation}
\label{109}
u\left( t \right)=\frac{-{{t}^{-\beta -1}}}{\Gamma \left( -\beta  \right)}\left( \frac{{{c}_{1}}}{r_{1}^{2}}+\frac{{{c}_{2}}}{r_{2}^{2}} \right)+{{c}_{1}}{{t}^{\beta -1}}O\left( {{\left| {{r}_{1}} \right|}^{-3}}{{t}^{-3\beta }} \right)+{{c}_{2}}{{t}^{\beta -1}}O\left( {{\left| {{r}_{2}} \right|}^{-3}}{{t}^{-3\beta }} \right)
\end{equation}
and
\begin{equation}
\label{110}
\begin{split}
{u}'\left( t \right)=\frac{\left( \beta +1 \right){{t}^{-\beta -2}}}{\Gamma \left( -\beta  \right)}\left( \frac{{{c}_{1}}}{r_{1}^{2}}+\frac{{{c}_{2}}}{r_{2}^{2}} \right)&+{{c}_{1}}{{t}^{\beta -2}}O\left( {{\left| {{r}_{1}} \right|}^{-3}}{{t}^{-3\beta }} \right)\\& +{{c}_{2}}{{t}^{\beta -2}}O\left( {{\left| {{r}_{2}} \right|}^{-3}}{{t}^{-3\beta }} \right).
\end{split}
\end{equation}
Therefore
\begin{equation}
\label{111}
\frac{t{u}'}{u}=\frac{\frac{\left( \beta +1 \right){{t}^{-\beta -1}}}{\Gamma \left( -\beta  \right)}\left( \frac{{{c}_{1}}}{r_{1}^{2}}+\frac{{{c}_{2}}}{r_{2}^{2}} \right)+{{c}_{1}}{{t}^{\beta -1}}O\left( {{\left| {{r}_{1}} \right|}^{-3}}{{t}^{-3\beta }} \right)+{{c}_{2}}{{t}^{\beta -1}}O\left( {{\left| {{r}_{2}} \right|}^{-3}}{{t}^{-3\beta }} \right)}{\frac{-{{t}^{-\beta -1}}}{\Gamma \left( -\beta  \right)}\left( \frac{{{c}_{1}}}{r_{1}^{2}}+\frac{{{c}_{2}}}{r_{2}^{2}} \right)+{{c}_{1}}{{t}^{\beta -1}}O\left( {{\left| {{r}_{1}} \right|}^{-3}}{{t}^{-3\beta }} \right)+{{c}_{2}}{{t}^{\beta -1}}O\left( {{\left| {{r}_{2}} \right|}^{-3}}{{t}^{-3\beta }} \right)}
\end{equation}
As $t\to \infty $, Eq. (\ref{111}) leads to
\begin{equation}
\label{112}
-\lim_{t\to\infty}\,\frac{t{u}'}{u}= -\lim_{t\to\infty} \,\frac{\frac{\left( \beta +1 \right){{t}^{-\beta -1}}}{\Gamma \left( -\beta  \right)}\left( \frac{{{c}_{1}}}{r_{1}^{2}}+\frac{{{c}_{2}}}{r_{2}^{2}} \right)}{\frac{-{{t}^{-\beta -1}}}{\Gamma \left( -\beta  \right)}\left( \frac{{{c}_{1}}}{r_{1}^{2}}+\frac{{{c}_{2}}}{r_{2}^{2}} \right)}=\beta +1
\end{equation}
and proof is completed.
\end{proof}
\begin{theorem}
\label{th2}
Let $0<\beta <\frac{1}{2}$, $\gamma ,\mu \in \mathbb{R}$ such that $0<\gamma <{{\mu }^{2}}$, and let $D_{t}^{\beta }u$ indicates the Caputo differentiation operator. For the initial value problem
\begin{equation}
\label{1}
D_{t}^{2\beta }u\left( t \right)+2\mu D_{t}^{\beta }u\left( t \right)+\gamma u\left( t \right)=0
\end{equation}
with the initial condition $u\left( 0 \right)=1$, and also for \it sequential linear differential equation of fractional order
\begin{equation}
\label{104}
{{\mathscr{D}}_{t}^{2\beta }}u+2\mu {{\mathscr{D}}_{t}^{\beta }}u+\gamma u=0
\end{equation}
with the initial condition $\mathscr{D}_{t}^{\beta }u\left( 0 \right)=0$ and $u\left( 0 \right)=1$, the following formula holds
\begin{equation}
\label{29}
\beta = -\lim_{t\to\infty} \frac{t{u}'}{u}
\end{equation}
{\bf Remark}: If $\mathscr{D}_{t}^{\beta }u\left( 0 \right)=0,$ then ${{c}_{1}}{{r}_{1}}+{{c}_{2}}{{r}_{2}}=0$ and $\mathscr{D}_{t}^{2\beta }u=D_{t}^{2\beta }u$. The case of $\mathscr{D}_{t}^{\beta }u\left( 0 \right)\ne 0$ leads to ${{c}_{1}}{{r}_{1}}+{{c}_{2}}{{r}_{2}}\ne 0$ and therefore, the coefficients ${{c}_{1}}$ and ${{c}_{2}}$ are not the same as those represented in the proof of Theorem \ref{th2} and must be calculated.
\end{theorem}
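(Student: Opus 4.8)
The plan is to reproduce the three-step architecture of the proof of Theorem~\ref{th1}—exact solution, then ordinary first derivative, then large-$t$ asymptotics of the quotient—while replacing the Riemann-Liouville eigenfunctions $t^{\beta-1}E_{\beta,\beta}(r_i t^\beta)$ by the Caputo eigenfunctions $E_\beta(r_i t^\beta)$. Since $0<\beta<\tfrac12$ gives $0<2\beta<1$, the operator $D_t^{2\beta}$ in (\ref{1}) is an honest Caputo derivative of order $2\beta$ needing a single datum, while the Remark supplies the composition identity $\mathscr{D}_t^{2\beta}u=D_t^{2\beta}u$ under $\mathscr{D}_t^{\beta}u(0)=0$; hence the sequential problem (\ref{104}) and the initial value problem (\ref{1}) have the same solution and one computation settles both. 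I would work with the sequential form, whose characteristic polynomial $r^2+2\mu r+\gamma$ factors the operator as $(\mathscr{D}_t^{\beta}-r_1)(\mathscr{D}_t^{\beta}-r_2)$.

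First I would record that under $0<\gamma<\mu^2$ (with $\mu>0$, which the decaying regime requires so that the expansions (\ref{22})--(\ref{24}) for $\lambda<0$ apply) the roots $r_{1,2}=-\mu\pm\sqrt{\mu^2-\gamma}$ are distinct, real and negative, and write the exact solution (cf.\ \cite{kilbas2006theory})
\begin{equation*}
u(t)=c_1 E_\beta(r_1 t^\beta)+c_2 E_\beta(r_2 t^\beta),
\end{equation*}
using the Caputo eigenrelation $\mathscr{D}_t^{\beta}E_\beta(r t^\beta)=r E_\beta(r t^\beta)$. The conditions $u(0)=1$ and $\mathscr{D}_t^{\beta}u(0)=0$ then give $c_1+c_2=1$ and $c_1 r_1+c_2 r_2=0$, solved by $c_1=-r_2/(r_1-r_2)$ and $c_2=r_1/(r_1-r_2)$.

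Next I would differentiate term by term with (\ref{12}), obtaining $u'(t)=c_1 r_1 t^{\beta-1}E_{\beta,\beta}(r_1 t^\beta)+c_2 r_2 t^{\beta-1}E_{\beta,\beta}(r_2 t^\beta)$, and substitute the large-argument expansions (\ref{22}) into $u$ and (\ref{24}) into $u'$. Keeping only leading terms,
\begin{equation*}
u(t)\sim-\frac{t^{-\beta}}{\Gamma(1-\beta)}\left(\frac{c_1}{r_1}+\frac{c_2}{r_2}\right),\qquad u'(t)\sim\frac{\beta\, t^{-\beta-1}}{\Gamma(1-\beta)}\left(\frac{c_1}{r_1}+\frac{c_2}{r_2}\right),
\end{equation*}
with relative corrections of order $t^{-\beta}$. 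In the quotient $t u'/u$ the common factor $t^{-\beta}(c_1/r_1+c_2/r_2)$ cancels, leaving $-\beta$ in the limit, which is precisely (\ref{29}).

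The step I expect to be the real obstacle is not the algebra but the nondegeneracy check that the leading coefficient $c_1/r_1+c_2/r_2$ does not vanish; otherwise the $t\to\infty$ limit would be governed by the subdominant $O(t^{-2\beta})$ and $O(t^{-2\beta-1})$ terms and the clean value $-\beta$ would fail. Using the explicit coefficients one computes $c_1/r_1+c_2/r_2=(r_1+r_2)/(r_1 r_2)=-2\mu/\gamma\neq0$, so the first terms genuinely dominate; I would therefore present this nonvanishing as the decisive point, mirroring the role played by $c_1/r_1^2+c_2/r_2^2$ in Theorem~\ref{th1}.
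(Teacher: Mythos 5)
Your proposal is correct and follows the paper's proof essentially step for step: the same exact solution $u(t)=c_1E_{\beta}(r_1t^{\beta})+c_2E_{\beta}(r_2t^{\beta})$ (your coefficients $c_1=-r_2/(r_1-r_2)$, $c_2=r_1/(r_1-r_2)$ coincide with the paper's $\tfrac12\left(1\pm\mu/\sqrt{\mu^2-\gamma}\right)$), the same term-by-term differentiation via (\ref{12}), the same substitution of the expansions (\ref{22}) and (\ref{24}), and the same cancellation in the quotient $tu'/u$ yielding $-\beta$. The only substantive addition is your explicit nondegeneracy check $c_1/r_1+c_2/r_2=(r_1+r_2)/(r_1r_2)=-2\mu/\gamma\neq0$ (together with the remark that $\mu>0$ is what makes both roots negative), which the paper uses implicitly without verification; this is a strengthening of the same argument, not a different route.
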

\begin{proof}
The equations (\ref{1}) and (\ref{104}) have the exact solution \cite{d2014time}, represented by
\begin{equation}
\label{28}
u\left( t \right)={{c}_{1}}{{E}_{\beta }}\left( {{r}_{1}}{{t}^{\beta }} \right)+{{c}_{2}}{{E}_{\beta }}\left( {{r}_{2}}{{t}^{\beta }} \right),
\end{equation}
where the coefficients ${{c}_{1}}$ and ${{c}_{2}}$ are respectively equal to $\frac{1}{2}\left( 1+\frac{\mu }{\sqrt{{{\mu }^{2}}-\gamma }} \right)$ and $\frac{1}{2}\left( 1-\frac{\mu }{\sqrt{{{\mu }^{2}}-\gamma }} \right)$, and the parameters ${{r}_{1}}$ and ${{r}_{2}}$ equal to $-\mu +\sqrt{{{\mu }^{2}}-\gamma }<0$ and $-\mu -\sqrt{{{\mu }^{2}}-\gamma }<0$ respectively.
The asymptotic behaviour of Mittag-Leffler function at infinity is applied to (\ref{28}). By using (\ref{22})
\begin{equation}
\label{30}
u\left( t \right)=-\frac{{{t}^{-\beta }}}{\Gamma \left( 1-\beta  \right)}\left( \frac{{{c}_{1}}}{{{r}_{1}}}+\frac{{{c}_{2}}}{{{r}_{2}}} \right)+{{c}_{1}}O\left( r_{1}^{-2}{{t}^{-2\beta }} \right)+{{c}_{2}}O\left( r_{2}^{-2}{{t}^{-2\beta }} \right).
\end{equation}
The first derivative of (\ref{28}) could be obtained by referring to (\ref{12})
\begin{equation}
\label{31}
{u}'\left( t \right)={{c}_{1}}{{r}_{1}}{{t}^{\beta -1}}{{E}_{\beta ,\beta }}\left( {{r}_{1}}{{t}^{\beta }} \right)+{{c}_{2}}{{r}_{2}}{{t}^{\beta -1}}{{E}_{\beta ,\beta }}\left( {{r}_{2}}{{t}^{\beta }} \right),
\end{equation}
and using (\ref{24}) and applying the asymptotic behaviour of Mittag-Leffler function to (\ref{31}), leads to
\begin{equation}
\label{32}
{u}'\left( t \right)=\frac{\beta {{t}^{-\beta -1}}}{\Gamma \left( 1-\beta  \right)}\left( \frac{{{c}_{1}}}{{{r}_{1}}}+\frac{{{c}_{2}}}{{{r}_{2}}} \right)+{{t}^{\beta -1}}\left( {{c}_{1}}{{r}_{1}}O\left( r_{1}^{-3}{{t}^{-3\beta }} \right)+{{c}_{2}}{{r}_{2}}O\left( r_{2}^{-3}{{t}^{-3\beta }} \right) \right).
\end{equation}
Therefore
\begin{equation}
\label{33}
\frac{t{u}'}{u}=t\frac{\frac{\beta {{t}^{-\beta -1}}}{\Gamma \left( 1-\beta  \right)}\left( \frac{{{c}_{1}}}{{{r}_{1}}}+\frac{{{c}_{2}}}{{{r}_{2}}} \right)+{{t}^{\beta -1}}\left( {{c}_{1}}{{r}_{1}}O\left( r_{1}^{-3}{{t}^{-3\beta }} \right)+{{c}_{2}}{{r}_{2}}O\left( r_{2}^{-3}{{t}^{-3\beta }} \right) \right)}{-\frac{{{t}^{-\beta }}}{\Gamma \left( 1-\beta  \right)}\left( \frac{{{c}_{1}}}{{{r}_{1}}}+\frac{{{c}_{2}}}{{{r}_{2}}} \right)+{{c}_{1}}O\left( r_{1}^{-2}{{t}^{-2\beta }} \right)+{{c}_{2}}O\left( r_{2}^{-2}{{t}^{-2\beta }} \right)}.
\end{equation}
As $t\to \infty $, from (\ref{33}) the result could be obtained
\begin{equation}
\label{34}
-\lim_{t\to\infty}\frac{t{u}'}{u}= -\lim_{t\to\infty} t\frac{\frac{\beta {{t}^{-\beta -1}}}{\Gamma \left( 1-\beta  \right)}\left( \frac{{{c}_{1}}}{{{r}_{1}}}+\frac{{{c}_{2}}}{{{r}_{2}}} \right)}{-\frac{{{t}^{-\beta }}}{\Gamma \left( 1-\beta  \right)}\left( \frac{{{c}_{1}}}{{{r}_{1}}}+\frac{{{c}_{2}}}{{{r}_{2}}} \right)}=\beta .
\end{equation}
\end{proof}
\begin{theorem}
\label{th3}
Let $1<\beta <2$, and $r$ be a real negative number. For the fractional differential equation with Caputo derivative
\begin{equation}
\label{113}
{{D}_{t}^{\beta }}u-ru=0,
\end{equation}
with the initial condition $u\left( 0 \right)=1$  and  ${u}'\left( 0 \right)=1$, the following relationship holds
\begin{equation}
\label{114}
\beta = 1 -\lim_{t\to\infty} \frac{t{u}'}{u}
\end{equation}
\end{theorem}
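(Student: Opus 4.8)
The plan is to follow the template already established in Theorems~\ref{th1} and~\ref{th2}: record the closed-form solution as a combination of Mittag-Leffler functions, differentiate it using the identities of Section~2, substitute the large-$t$ asymptotic expansions, and read off the leading order of $tu'/u$.

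Since $D_t^\beta$ is the Caputo operator with $1<\beta<2$, here $n=2$, so two initial conditions are prescribed and the Laplace transform gives $U(s)=(s^{\beta-1}+s^{\beta-2})/(s^\beta-r)$; inverting (or citing \cite{kilbas2006theory,d2014time}) yields
\[
u(t)=E_\beta\!\left(rt^\beta\right)+t\,E_{\beta,2}\!\left(rt^\beta\right).
\]
I would then differentiate term by term: identity~(\ref{15}) gives $\frac{d}{dt}\bigl(t\,E_{\beta,2}(rt^\beta)\bigr)=E_\beta(rt^\beta)$ and identity~(\ref{12}) gives $\frac{d}{dt}E_\beta(rt^\beta)=r\,t^{\beta-1}E_{\beta,\beta}(rt^\beta)$, so that
\[
u'(t)=r\,t^{\beta-1}E_{\beta,\beta}\!\left(rt^\beta\right)+E_\beta\!\left(rt^\beta\right).
\]

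Next I would insert the asymptotics. For the solution, (\ref{22}) gives $E_\beta(rt^\beta)\sim -t^{-\beta}/\bigl(r\Gamma(1-\beta)\bigr)$, while (\ref{20}) applied with second parameter $2$ gives $t\,E_{\beta,2}(rt^\beta)\sim -t^{1-\beta}/\bigl(r\Gamma(2-\beta)\bigr)$; since $1-\beta>-\beta$, the latter dominates, so $u(t)\sim -t^{1-\beta}/\bigl(r\Gamma(2-\beta)\bigr)$. For the derivative, (\ref{24}) gives $r\,t^{\beta-1}E_{\beta,\beta}(rt^\beta)\sim \beta\,t^{-\beta-1}/\bigl(r\Gamma(1-\beta)\bigr)$, whereas the $E_\beta$ summand contributes the slower-decaying $-t^{-\beta}/\bigl(r\Gamma(1-\beta)\bigr)$, which therefore dominates $u'$. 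Forming the quotient, the factors of $r$ cancel and the powers of $t$ balance ($t\cdot t^{-\beta}=t^{1-\beta}$), leaving
\[
\lim_{t\to\infty}\frac{t\,u'}{u}=\frac{\Gamma(2-\beta)}{\Gamma(1-\beta)}=1-\beta,
\]
where the last equality uses $\Gamma(2-\beta)=(1-\beta)\Gamma(1-\beta)$; rearranging gives the stated formula (\ref{114}).

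The step I expect to be the main obstacle is the careful bookkeeping of which summand dominates, since this is precisely where the argument departs from Theorems~\ref{th1} and~\ref{th2}. The leading term of $u$ comes from the $t\,E_{\beta,2}$ piece (order $t^{1-\beta}$), but the leading term of $u'$ comes from the $E_\beta$ piece (order $t^{-\beta}$): the dominant contributions to numerator and denominator originate in \emph{different} components of the solution, and the $E_\beta$ term itself is subdominant in $u$ yet dominant in $u'$. I would need to verify that all discarded contributions---the $E_\beta$ term in $u$, the $E_{\beta,\beta}$ term in $u'$, and every $O(\cdot)$ remainder from (\ref{20})---are of strictly lower order after multiplying by $t$ and dividing, so that they vanish in the limit. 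This requires only that the range $1<\beta<2$ keep the gamma factors finite and nonzero, which holds because $1-\beta\in(-1,0)$ and $2-\beta\in(0,1)$, and that the hypothesis $0<\beta<2$ of the expansion (\ref{20}) be satisfied.
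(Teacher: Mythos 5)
Your proposal is correct and follows essentially the same route as the paper: the same closed-form solution $u(t)=E_{\beta}\left(rt^{\beta}\right)+tE_{\beta,2}\left(rt^{\beta}\right)$, the same derivative obtained from identities (\ref{12}) and (\ref{15}), and the same asymptotic expansions (\ref{20}), (\ref{22}), (\ref{24}), yielding $\lim_{t\to\infty}tu'/u=1-\beta$. The only cosmetic difference is that you discard the subdominant terms of $u$ and $u'$ before forming the quotient and invoke $\Gamma\left(2-\beta\right)=\left(1-\beta\right)\Gamma\left(1-\beta\right)$ at the end, whereas the paper folds that Gamma recurrence in at the start, carries both leading terms of each expansion into the quotient, and lets the limit select the dominant behaviour.
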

\begin{proof}
The exact solution to Eq. (\ref{113}) is
\begin{equation}\label{115}
u\left( t \right)={{E}_{\beta }}\left( r{{t}^{\beta }} \right)+t{{E}_{\beta ,2}}\left( r{{t}^{\beta }} \right).
\end{equation}
The first derivative of $u\left( t \right)$ could be calculated by referring to (\ref{12}) and (\ref{15})
\begin{equation}
\label{116}
{u}'\left( t \right)=r{{t}^{\beta -1}}{{E}_{\beta ,\beta }}\left( r{{t}^{\beta }} \right)+{{E}_{\beta }}\left( r{{t}^{\beta }} \right).
\end{equation}
The asymptotic expansions of $u\left( t \right)$ and ${u}'\left( t \right)$ at infinity are respectively
\begin{equation}
\label{117}
u\left( t \right)=-\frac{{{t}^{-\beta }}}{r\Gamma \left( 1-\beta  \right)}\left( 1+\frac{t}{1-\beta } \right)+O\left( {{\left| r \right|}^{-2}}{{t}^{-2\beta }} \right)+t O\left( {{\left| r \right|}^{-2}}{{t}^{-2\beta }} \right)
\end{equation}
and
\begin{equation}
\label{118}
{u}'\left( t \right)=\frac{{{t}^{-\beta }}}{r\Gamma \left( 1-\beta  \right)}\left( \frac{\beta }{t}-1 \right)+r{{t}^{\beta -1}}O\left( {{\left| {r} \right|}^{-3}}{{t}^{-3\beta }} \right)+ O\left( {{\left| {r} \right|}^{-2}}{{t}^{-2\beta }} \right)
\end{equation}
therefore
\begin{equation}
\label{119}
\frac{t{u}'}{u}=\frac{\frac{{{t}^{-\beta }}}{r\Gamma \left( 1-\beta  \right)}\left( \beta -t \right)+r{{t}^{\beta }}O\left( {{\left| {r} \right|}^{-3}}{{t}^{-3\beta }} \right)+O\left( {{\left| {r} \right|}^{-2}}{{t}^{-2\beta }} \right)}{-\frac{{{t}^{-\beta }}}{r\Gamma \left( 1-\beta  \right)}\left( 1+\frac{t}{1-\beta } \right)+O\left( {{\left| r \right|}^{-2}}{{t}^{-2\beta }} \right)+tO\left( {{\left| r \right|}^{-2}}{{t}^{-2\beta }} \right)}.
\end{equation}
As $t\to \infty $, Eq. (\ref{119}) leads to
\begin{equation}
\label{120}
\lim_{t\to\infty}\frac{t{u}'}{u}= \lim_{t\to\infty}\frac{t{u}'}{u}\,\frac{\frac{{{t}^{-\beta }}}{r\Gamma \left( 1-\beta  \right)}\left( \beta -t \right)}{-\frac{{{t}^{-\beta }}}{r\Gamma \left( 1-\beta  \right)}\left( 1+\frac{t}{1-\beta } \right)}=1-\beta
\end{equation}
and proof is completed.
\end{proof}
\begin{theorem}
\label{th4}
Let $0<\beta <1$, and $r$ be a real negative number. For the fractional differential equation (\ref{113}) with the initial condition $u\left( 0 \right)=1$, the following relationship holds
\begin{equation}
\label{121}
\beta = -\lim_{t\to\infty} \frac{t{u}'}{u}
\end{equation}
\end{theorem}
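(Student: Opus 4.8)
The plan is to reuse, in its simplest instance, the mechanism already employed in the proofs of Theorems \ref{th2} and \ref{th3}. Equation (\ref{113}) with a single fractional order and the datum $u(0)=1$ admits the classical single-mode fundamental solution of the Caputo relaxation equation,
\begin{equation}
u(t)=E_{\beta}\left(r t^{\beta}\right),
\end{equation}
so only one Mittag-Leffler term appears rather than the two-term combination of Theorem \ref{th2}; the argument is therefore a streamlined copy of that proof.

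First I would differentiate by means of formula (\ref{12}), obtaining
\begin{equation}
u'(t)=r\,t^{\beta-1}E_{\beta,\beta}\left(r t^{\beta}\right).
\end{equation}
Next I would substitute the large-$t$ asymptotics from the preliminaries. Formula (\ref{22}) gives $u(t)=-\frac{t^{-\beta}}{r\Gamma(1-\beta)}+O\!\left(|r|^{-2}t^{-2\beta}\right)$, and formula (\ref{24}) together with the prefactor $r\,t^{\beta-1}$ gives $t\,u'(t)=\frac{\beta\, t^{-\beta}}{r\Gamma(1-\beta)}+O\!\left(|r|^{-2}t^{-2\beta}\right)$. I expect the leading terms of $t\,u'$ and $u$ to differ only by the factor $-\beta$.

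Forming the quotient $t u'/u$, both numerator and denominator share the common leading factor $t^{-\beta}/\big(r\Gamma(1-\beta)\big)$; dividing it out leaves the leading ratio $\beta/(-1)=-\beta$ plus remainders of relative order $O(t^{-\beta})$. Letting $t\to\infty$ then yields $\lim_{t\to\infty} t u'/u=-\beta$, i.e. $\beta=-\lim_{t\to\infty} t u'/u$, which is (\ref{121}).

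The one point I would treat with care --- as in the earlier proofs --- is the justification that the $O$-remainders drop out in the limit. Since $\beta>0$, every error term in $u$ and in $t u'$ carries an extra power $t^{-\beta}$ relative to its leading term, so after dividing numerator and denominator by $t^{-\beta}$ these remainders tend to zero; moreover the denominator's leading coefficient $-1/\big(r\Gamma(1-\beta)\big)$ is nonzero because $r<0$ and $\Gamma(1-\beta)$ is finite and positive for $0<\beta<1$. Establishing this nonvanishing together with the decay of the remainders is the crux; the remaining bookkeeping of exponents is routine.
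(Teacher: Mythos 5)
Your proposal is correct and follows essentially the same route as the paper's own proof: the single-mode solution $u(t)=E_{\beta}\left(rt^{\beta}\right)$, differentiation via (\ref{12}), the asymptotic expansions (\ref{22}) and (\ref{24}), and passage to the limit in the quotient $\frac{tu'}{u}$. Your added care about the nonvanishing of the leading coefficient $-1/\bigl(r\Gamma(1-\beta)\bigr)$ and the decay of the remainder terms merely makes explicit what the paper leaves implicit.
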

\begin{proof}
The exact solution to Eq. (\ref{113}), with  $0<\beta <1$ is in the form \cite{lin2013laplace}
\begin{equation}
\label{122}
u\left( t \right)={{E}_{\beta }}\left( r{{t}^{\beta }} \right).
\end{equation}
The first derivative of $u\left( t \right)$ could be calculated by referring to (\ref{12})
\begin{equation}
\label{123}
{u}'\left( t \right)=r{{t}^{\beta -1}}{{E}_{\beta ,\beta }}\left( r{{t}^{\beta }} \right).
\end{equation}
The asymptotic expansions of $u\left( t \right)$ and ${u}'\left( t \right)$ are respectively
\begin{equation}
\label{124}
u\left( t \right)=-\frac{{{t}^{-\beta }}}{r\Gamma \left( 1-\beta  \right)}+ O\left( {{\left| r \right|}^{-2}}{{t}^{-2\beta }} \right),
\end{equation}
and
\begin{equation}
\label{125}
{u}'\left( t \right)=\frac{\beta {{t}^{-\beta -1}}}{r\Gamma \left( 1-\beta  \right)}+r{{t}^{\beta -1}}O\left( {{\left| {r} \right|}^{-3}}{{t}^{-3\beta }} \right).
\end{equation}
Therefore
\begin{equation}
\label{126}
\frac{t{u}'}{u}=\frac{\frac{\beta {{t}^{-\beta}}}{r\Gamma \left( 1-\beta  \right)}+r{{t}^{\beta -1}}O\left( {{\left| {r} \right|}^{-3}}{{t}^{-3\beta }} \right)}{-\frac{{{t}^{-\beta }}}{r\Gamma \left( 1-\beta  \right)}+O\left( {{\left| r \right|}^{-2}}{{t}^{-2\beta }} \right)}.
\end{equation}
As $t\to \infty $, Eq. (\ref{126}) results in
\begin{equation}
\label{127}
\lim_ {t\to \infty }\,\frac{t{u}'}{u}=\lim_ {t\to \infty }\,\frac{\frac{\beta {{t}^{-\beta }}}{r\Gamma \left( 1-\beta  \right)}}{-\frac{{{t}^{-\beta }}}{r\Gamma \left( 1-\beta  \right)}}=-\beta .
\end{equation}
\end{proof}
\section{Numerical investigation}
\begin{example}
Consider the initial value problem
\begin{equation}\label{35}
D_{t}^{2\beta }u+2D_{t}^{\beta }u+0.7u=0,\qquad t\ge 0,\qquad 0<\beta <\frac{1}{2}
\end{equation}
with the initial condition $u\left( 0 \right)=1$. The exact solution to (\ref{35}) has the form
\begin{equation}
\label{36}
u\left( t \right)={{c}_{1}}{{E}_{\beta }}\left( {{r}_{1}}{{t}^{\beta }} \right)+{{c}_{2}}{{E}_{\beta }}\left( {{r}_{2}}{{t}^{\beta }} \right),
\end{equation}
where ${{r}_{1}}=-0.4523$, ${{r}_{2}}=-1.5477$, ${{c}_{1}}=1.4129$, ${{c}_{2}}= -0.4129$.
Figure \ref{fig1} represents the graph of $-\frac{t{u}'}{u}$, which has been evaluated for several different values of $\beta$, by using the exact representation of ${u}'$ and $u$. It could be obviously seen that $-\frac{t{u}'}{u}$ tends asymptotically to $\beta $, as $t$ goes to infinity. Numerical results coincide exactly with the result of the Theorem \ref{th2} and the rate of the convergence of $-\frac{t{u}'}{u}$ is greatly influenced by the value of $\beta $.
\begin{figure}[h!]
\centering
\includegraphics[scale=0.7]{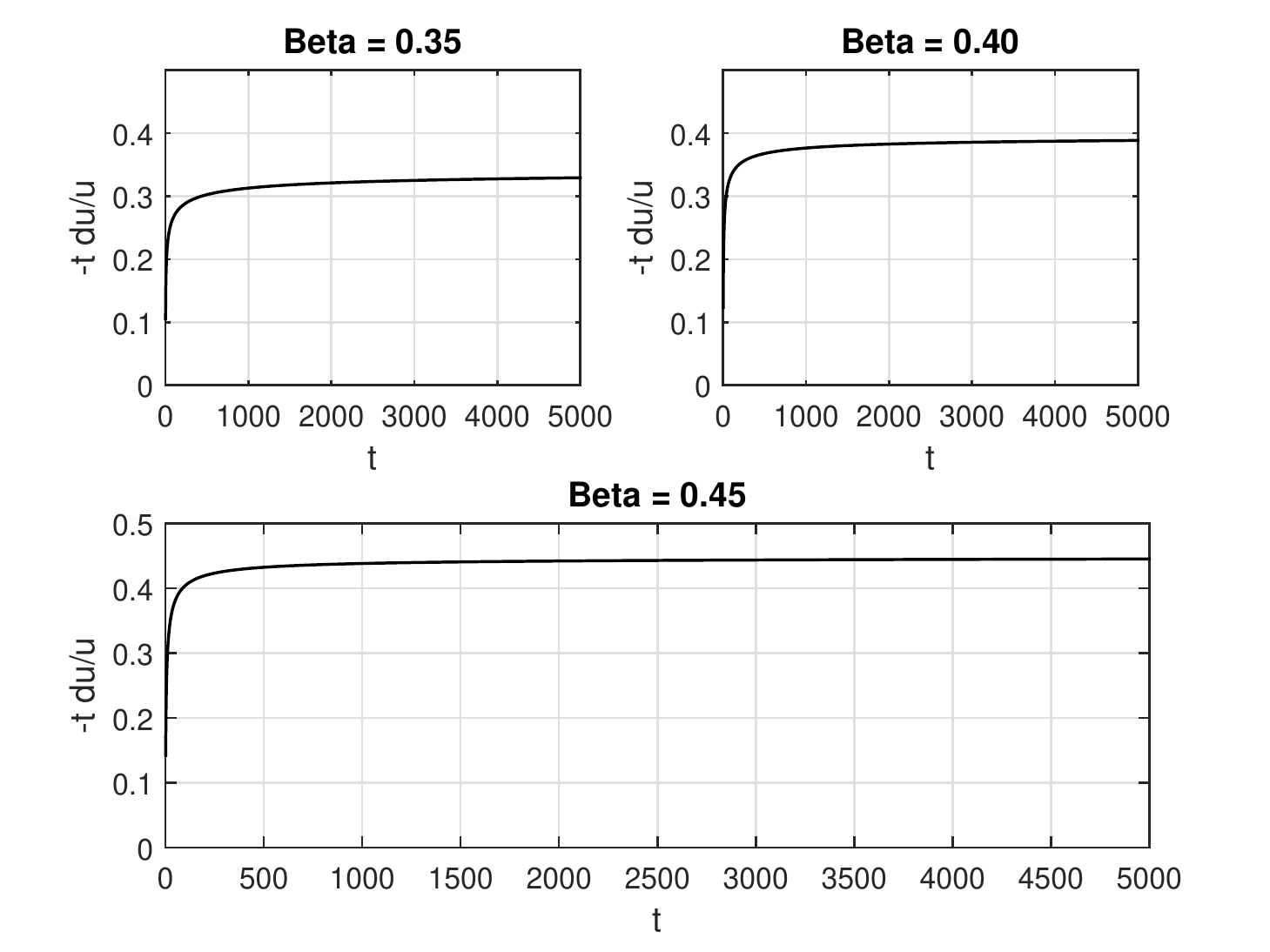}
\caption{Graph of $-\frac{t{u}'}{u}$ for $\beta =0.35$, $\beta =0.40$ and $\beta =0.45$.}
\label{fig1}
\end{figure}
\end{example}
\begin{example}
Consider the fractional differential equation
\begin{equation}
\begin{cases}
  {{D}_{t}^{\beta }}u+2u=0 \\
 u\left( 0 \right)=1 \\
 u'\left( 0 \right)=1
\end{cases}
\end{equation}
where $1<\beta <2$ and  ${{D}_{t}^{\beta }}u$ is in the sense of Caputo derivative. The exact solution is in the form of
\begin{equation}
u\left( t \right)={{E}_{\beta }}\left( -2{{t}^{\beta }} \right)+t{{E}_{\beta ,2}}\left( -2{{t}^{\beta }} \right).
\end{equation}
According to Theorem \ref{th3}, the term $1-\frac{t{u}'}{u}$ tends to the order $\beta $ as $t$ goes to the infinity. The numerical evaluation of $1-\frac{t{u}'}{u}$ has been conducted for different values of $\beta $, by using the exact expressions of ${u}'$ and $u$, shown in Figure \ref{fig2}. As it could be seen, $1-\frac{t{u}'}{u}$ converges to $\beta $ with a rate, which is obviously affected by the value of $\beta $, i.e. the convergence will be faster if the fractional order $\beta $ tends to 2.
\begin{figure}[h!]
\centering
\includegraphics[scale=0.7]{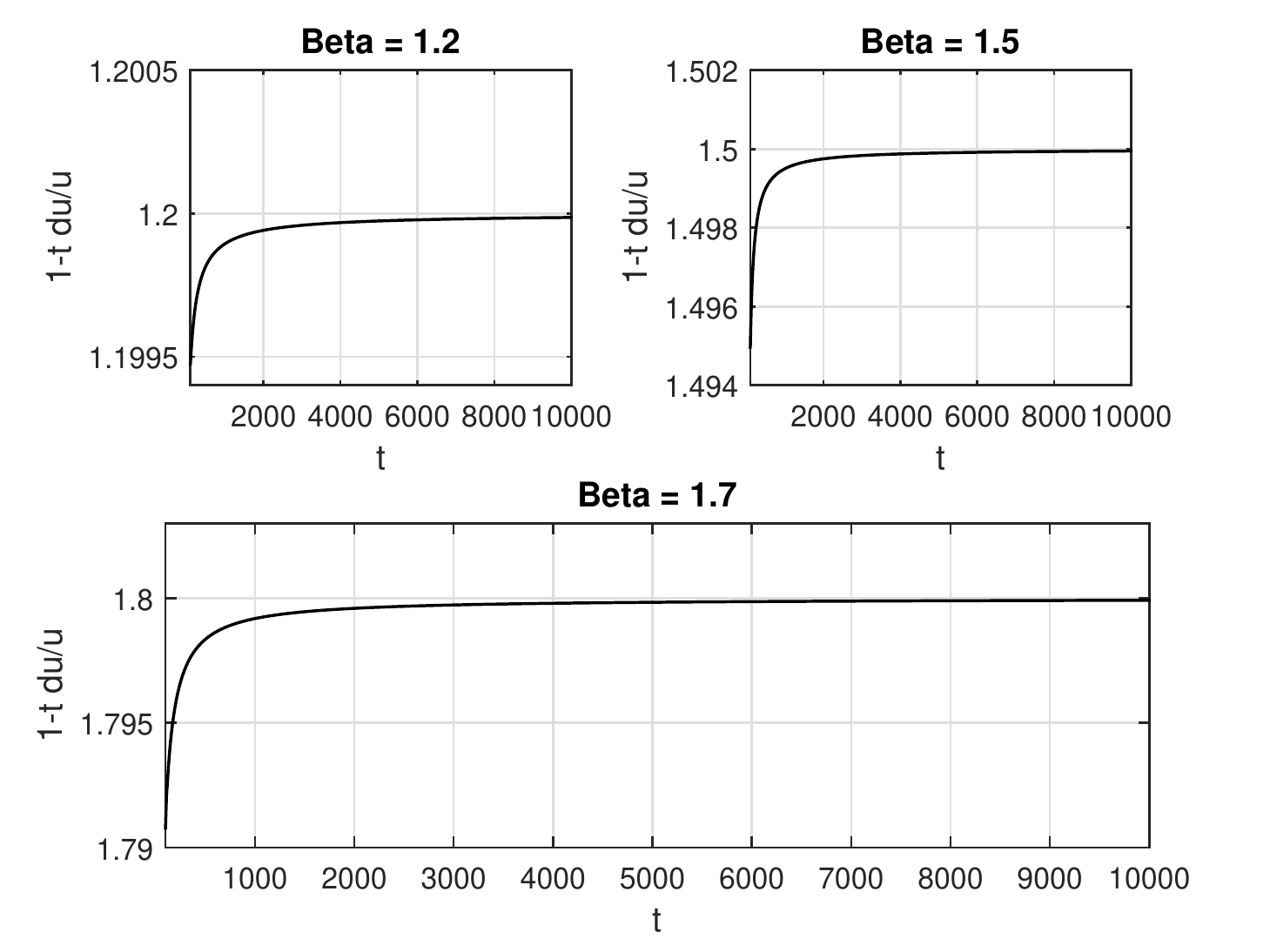}
\caption{Graph of $1-\frac{t{u}'}{u}$ for $\beta =1.2$, $\beta =1.5$ and $\beta =1.7$.}
\label{fig2}
\end{figure}
\end{example}
\section{Conclusion}
Inverse problem occurs in many branches of science and have been also examined in fractional differential systems. For instance, determination of the order of fractional systems has been indicated to be of such crucial importance that it could influence how anomalous diffusion equations must be appropriately simulated. Therefore, in this article, the exact solution to several classes of linear fractional differential equations represented, for which the fractional order determination was demonstrated by using asymptotic expansion of Mittag-Leffler function. Numerical tests have been conducted for different fractional orders, illustrating the accuracy of the formulae proved in the theorems.
\section*{Acknowledgements}
The authors are grateful to Professor Masahiro Yamamoto for pointing out the articles\cite{hatano2013determination,sakamoto2011initial,cheng2009uniqueness}.
\bibliographystyle{plain}
\bibliography{Asymp}
$^1$ Dipartimento di Scienze di Base e Applicate per l'Ingegneria \\
Sapienza Universit\`{a} di Roma \\
Via Antonio Scarpa n. 16 \\
00161 Rome, Italy \\[4pt]
  e-mail: mirko.dovidio@sbai.uniroma1.it
  \\[12pt]

  $^2$ Dipartimento di Scienze di Base e Applicate per l'Ingegneria \\
Sapienza Universit\`{a} di Roma \\
Via Antonio Scarpa n. 16 \\
00161 Rome, Italy \\[4pt]
  e-mail: paola.loreti@sbai.uniroma1.it
   \\[12pt]

    $^3$ Department of Structure and Material\\
Universiti Teknologi Malaysia\\
81310 Skudai, Johor, Malaysia\\[4pt]
e-mail: alireza.momenzadeh@gmail.com
   \\[12pt]

    $^4$ Dipartimento di Scienze di Base e Applicate per l'Ingegneria \\
Sapienza Universit\`{a} di Roma \\
Via Antonio Scarpa n. 16 \\
00161 Rome, Italy \\[4pt]
  e-mail: sima.sarvahrabi@sbai.uniroma1.it
  
\end{document}